\newtheorem{lemma}{Lemma}[section]
\newtheorem{thm}[lemma]{Theorem}
\newtheorem{prop}[lemma]{Proposition}
\newtheorem{cor}[lemma]{Corollary}
\theoremstyle{definition}
\newtheorem{defn}[lemma]{Definition}
\newtheorem{rem}[lemma]{Remark}
\theoremstyle{definition}
\definecolor{darkgreen}{cmyk}{1,0,1,.2}
\newcommand{\R} {\ensuremath {\mathbb{R}}}
\newcommand{\calC} {\ensuremath {\mathcal{C}}}
\newcommand{\calS} {\ensuremath {\mathcal{S}}}
\address{Department of Mathematics and Statistics, McGill University, Burnside Hall, 805 Sherbrooke Street West, Montreal, QC, Canada H3A 0B9}\address{Institute of Mathematics of the Polish Academy of Sciences, \'{S}niadeckich 8, 00-656 Warsaw, Poland }
\email{piotr.przytycki@mcgill.ca}
\address{Department of Mathematics, ETH Zurich, 8092 Zurich, Switzerland}
\email{sisto@math.ethz.ch}
\begin{document}

\title{A note on acylindrical hyperbolicity of Mapping Class Groups}
\author{Piotr Przytycki}
\author{Alessandro Sisto}

\begin{abstract}
The aim of this note is to give the simplest possible proof that Mapping Class Groups of closed hyperbolic surfaces are acylindrically hyperbolic, and more specifically that their curve graphs are hyperbolic and that pseudo-Anosovs act on them as loxodromic WPDs.
\end{abstract}

\maketitle

\section{Introduction}
Following Osin~\cite{Os-acyl}, we say that a group is
\emph{acylindrically hyperbolic} if it is not virtually cyclic
and it acts on a (Gromov-)hyperbolic space non-trivially in the
sense that there is a loxodromic WPD (weakly proper
discontinuous) element. Recall that an element $g$ of a group
acting on a metric space $X$ is \emph{loxodromic} if for each $x\in X$ there
exists $\epsilon>0$ such that for any integer
$n$ we have $d_X(x,g^nx)\geq \epsilon |n|$. See
Subsection~\ref{sec:WPD} for the definition of a WPD element, a
notion due to Bestvina--Fujiwara~\cite{BeFu-wpd}.

Acylindrical hyperbolicity has strong consequences: All
acylindrically hyperbolic groups are SQ-universal, contain free
normal subgroups~\cite{DGO}, contain Morse elements and hence
have cut-points in all asymptotic cones~\cite{Si-hypembmorse},
and have infinite dimensional bounded cohomology in
degrees~2~\cite{HullOsin} and~3~\cite{FPS-H3b-acylhyp}.
Moreover, acylindrically hyperbolic groups without finite
normal subgroups have simple reduced $C^*$-algebra~\cite{DGO}
and their commensurating endomorphisms are inner
automorphisms~\cite{MOS-pwise-inner}.

Mapping Class Groups of closed surfaces of genus at least~$2$
are among the motivating examples of acylindrically hyperbolic
groups. A natural hyperbolic space on which the Mapping Class
Group $\mathrm{MCG}(S)$ of the surface $S$ as above acts is the
\emph{curve graph} $\calC(S)$ of $S$. The vertices of
$\calC(S)$ are isotopy classes of essential simple closed
curves on $S$, and two isotopy classes are joined by an edge
(of length~1) if they contain disjoint representatives. From
now on a \emph{curve} means the isotopy class of an essential
simple closed curve on $S$, and we say that two curves are
\emph{disjoint} (resp.\ \emph{intersecting $\leq M$ times}) if
they can be represented disjointly (resp.\ intersecting $\leq
M$ times).

The fact that $\calC(S)$ is hyperbolic was proved by
Masur--Minsky~\cite{MM1}, who also proved that any
pseudo-Anosov element acts loxodromically, while the fact that
any pseudo-Anosov is WPD is due to
Bestvina--Fujiwara~\cite{BeFu-wpd}. (Bowditch proved a stronger
property, namely acylindricity~\cite{Bow-tight}.)

As it turns out, there are relatively simple proofs of all
these facts, which we present in this note. We think that a
result as important as the acylindrical hyperbolicity of
Mapping Class Groups deserves such a(n almost) self-contained
account.

In Section~\ref{sec:hyp} we show that curve graphs are
hyperbolic (Theorem~\ref{thm:hyp}), while in
Section~\ref{loxWPD} we show that pseudo-Anosovs are loxodromic
(Corollary~\ref{lox}) and~WPD (Corollary~\ref{WPD}).

\section{Curve graphs are hyperbolic}\label{sec:hyp}

Here is the main theorem of the section.

\begin{thm}\label{thm:hyp}
Let $S$ be a closed orientable surface of genus at least~2.
Then its curve graph $\calC(S)$ is hyperbolic.
\end{thm}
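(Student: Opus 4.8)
The plan is to establish hyperbolicity of $\calC(S)$ via the now-standard "unicorn path" (or "1-slim triangle via guessing geodesics") strategy, which is the simplest self-contained route. First I would, for each pair of curves $a,b$ that intersect, fix tight (minimal position) representatives and define a family of paths $\calP(a,b)$ connecting $a$ to $b$ in $\calC(S)$. Following Hensel--Przytycki--Webb, the \emph{unicorn arcs} are built by choosing an intersection point $p$ of $a$ and $b$, a point $q$ on $a\cap b$, and concatenating the subarc of $a$ from its endpoint to $p$ with the subarc of $b$ from $q$ to its endpoint (after reducing to make the result embedded); ordering these "unicorn curves" by the position of $p$ along $a$ gives a path from $a$ to $b$, which one checks is a $1$-Lipschitz path whose consecutive vertices are disjoint. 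One works with arcs on the complement of a curve (or passes through the arc graph), but since the paper is about $\calC(S)$ for closed $S$, I would either present the argument directly for curves or invoke the standard quasi-isometry between the curve graph and an arc-and-curve graph on a punctured surface.

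The crucial combinatorial input is then the following two properties of the family $\calP$: (i) \emph{stability}, that if $c$ is a unicorn curve on a path in $\calP(a,b)$, then $\calP(a,c)$ and $\calP(c,b)$ essentially appear as sub-paths of $\calP(a,b)$ (up to bounded error); and (ii) a \emph{slimness} estimate showing each path in $\calP(a,b)$ lies within distance $1$ of any other path in $\calP(a,b)$ and, more importantly, within bounded distance of $\calP(a,c)\cup\calP(c,b)$ for any third curve $c$. From these one gets that the paths in $\calP$ form a family of uniformly slim triangles, and then a general criterion (the "guessing geodesics" lemma of Masur--Schleimer / Bowditch, or the $8$-slim-triangles-imply-hyperbolic argument) upgrades this to Gromov hyperbolicity of $\calC(S)$, in fact with a uniform constant independent of $S$.

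The main obstacle I expect is proving the slimness/stability properties of unicorn paths cleanly: one has to carefully analyze how two unicorn arcs built from the same $a$ but different third curves $c$ interact, keeping track of intersection points and the "surgery" that makes arcs embedded, and showing that a vertex on $\calP(a,b)$ is disjoint from \emph{some} vertex on $\calP(a,c)\cup\calP(c,b)$. This is a finite but delicate case analysis on how subarcs of $a$, $b$, $c$ overlap. A secondary technical point is the passage between arcs and curves (handling the closed surface case, where there are no punctures to anchor arcs) — the standard fix is to puncture $S$ at a point, work in the arc graph of the punctured surface, and use that forgetting the puncture induces a quasi-isometry $\calC(S_{0,1})\to\calC(S)$ and that the arc graph is quasi-isometric to the curve graph. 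Once these are in place, the hyperbolicity constant falls out of the guessing-geodesics criterion with no further work.
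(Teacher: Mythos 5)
Your overall strategy---surgery-type paths between curves checked against the Masur--Schleimer/Bowditch guessing-geodesics criterion---is exactly the strategy of the paper (which is indeed inspired by Hensel--Przytycki--Webb), but as written your plan has a genuine gap precisely at the case the theorem is about: closed surfaces. The unicorn construction you describe (subarc of $a$ from an endpoint to $p$, subarc of $b$ from $q$ to an endpoint) is a construction for \emph{arcs}, so for a closed surface you must either reduce to a punctured surface or redo the construction for closed curves, and your proposed reduction is incorrect: the map $\calC(S\setminus\{z\})\to\calC(S)$ induced by forgetting the puncture is surjective and Lipschitz but \emph{not} a quasi-isometry. Its fibers are unbounded: two lifts of the same curve differ by a point-pushing mapping class (Birman exact sequence), and pushing around a filling loop is pseudo-Anosov on the punctured surface by Kra's theorem, so its iterates move a given lift arbitrarily far in the curve graph of the punctured surface. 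Hence hyperbolicity cannot be transferred this way; in Hensel--Przytycki--Webb the closed case requires a separate argument (puncturing in the complement of $a\cup b$ and verifying the criterion directly for the projected unicorn paths). Your alternative branch, ``present the argument directly for curves,'' is the right idea but is not carried out, and the slimness/stability estimates you correctly identify as the crux are deferred to ``a finite but delicate case analysis''---which is the entire mathematical content of the proof.

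For comparison, the paper's proof shows how the direct-for-curves route becomes short once two adjustments are made. First, it uses \emph{bicorn curves}: closed curves formed by an arc $a'$ of $a$ and an arc $b'$ of $b$ meeting only at their endpoints (no punctures, no arc graph, no transfer lemmas). Second, it works in the augmented curve graph $\calC_{\mathrm{aug}}(S)$, where curves intersecting at most twice are adjacent; this is quasi-isometric to $\calC(S)$ because two such curves have a common neighbour. With these choices the two hypotheses of the criterion each take a few lines: connectedness of the set $\eta(a,b)$ of bicorn curves follows by ordering bicorns by inclusion of their $b$-arcs and extending the $b$-arc minimally to the next intersection with $a'$, which produces a larger bicorn meeting the old one at most once; and $1$-slimness of bicorn triangles follows by taking a minimal arc of $d$ with both endpoints on $a'$ or on $b'$, which yields a bicorn between $a$ and $d$ (or $b$ and $d$) meeting $c$ at most twice. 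If you repair your proposal, either adopt this bicorn/augmented-graph formulation or carry out the genuinely more delicate punctured-surface projection argument of Hensel--Przytycki--Webb; the quasi-isometry shortcut you invoke is not available.
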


Our proof is inspired by the one in \cite{HPW-hyp}. It also
yields uniform hyperbolicity, but we do not record it in this
note.

We will actually prove hyperbolicity of the \emph{augmented
curve graph} $\calC_{{\mathrm{aug}}}(S)$, the metric graph with
the same vertex set as $\calC(S)$ and edges connecting pairs of
curves that intersect at most twice. Notice that
$\calC_{{\mathrm{aug}}}(S)$ is quasi-isometric to $\calC(S)$.
This follows from the fact that two curves that intersect at
most twice have a common neighbour in $\calC(S)$: the
neighbourhood of their union is a non-closed surface of Euler
characteristic $\geq -2$, whence of genus $\leq 1$, and thus
has an essential boundary component.

We will use the following criterion for hyperbolicity due to
Masur--Schleimer~\cite{MS-diskcplx}. A one page proof is
available in~\cite{Bow-unifhyp}. Here $N_D(\eta)$ denotes the
$D$-neighbourhood of a subset $\eta$ of a metric space.

\begin{prop}\label{guessgeod}
Let $X$ be a metric graph, and let $D\geq 0$. Suppose that to
every pair of vertices $x,y\in X^{(0)}$ we have assigned a
connected subgraph $\eta(x,y)$ containing $x$ and $y$ in such a
way that
 \begin{enumerate}
  \item for all $x,y\in X^{(0)}$ with $d_X(x,y)\leq 1$ we
      have $\mathrm{diam}_X(\eta(x,y))\leq D$,
  \item for all $x,y,z\in X^{(0)}$ we have
      $\eta(x,y)\subseteq N_D\big(\eta(x,z)\cup
      \eta(z,y)\big)$.
 \end{enumerate}
Then $X$ is hyperbolic.
\end{prop}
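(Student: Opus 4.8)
The plan is to verify the thin-triangles (slimness) condition for geodesic triangles in $X$, and to do so by showing that each assigned subgraph $\eta(x,y)$ stays uniformly Hausdorff-close to any geodesic $[x,y]$ joining its endpoints. Granting such a uniform bound, say that $\eta(x,y)$ and $[x,y]$ are within Hausdorff distance $M=M(D)$ for all vertices $x,y$, slimness is immediate from condition~(2): for a geodesic triangle with vertices $x,y,z$ one has $[x,y]\subseteq N_M(\eta(x,y))\subseteq N_{M+D}(\eta(x,z)\cup\eta(z,y))\subseteq N_{2M+D}([x,z]\cup[z,y])$, so every geodesic triangle is $(2M+D)$-slim and hence $X$ is hyperbolic. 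Thus the whole difficulty is concentrated in establishing the Hausdorff bound, whose two directions I would treat separately.

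For the direction asserting $\eta(x,y)\subseteq N_{L}([x,y])$, with $L$ to be estimated, I would argue by subdivision. Letting $m$ be a midpoint of $[x,y]$, condition~(2) gives $\eta(x,y)\subseteq N_D(\eta(x,m)\cup\eta(m,y))$; since $[x,m]$ and $[m,y]$ are sub-geodesics of $[x,y]$, recursing on the two halves and terminating, via condition~(1), at pairs of adjacent vertices bounds the deviation of $\eta(x,y)$ from $[x,y]$ in terms of $d_X(x,y)$. The reverse direction $[x,y]\subseteq N_{L'}(\eta(x,y))$ I would then obtain for free from the \emph{connectedness} of $\eta(x,y)$: sending each vertex of $\eta(x,y)$ to a nearest point of the interval $[x,y]$, connectedness forces the image to be coarsely connected, and a coarsely connected subset of a geodesic segment that contains both endpoints is automatically coarsely dense; translating back through the first bound shows every point of $[x,y]$ is close to $\eta(x,y)$. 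This is precisely the step where the hypothesis that $\eta(x,y)$ is a connected subgraph, rather than an arbitrary set, is indispensable.

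The main obstacle is that the naive subdivision loses an additive constant at each of the $\sim\log_2 d_X(x,y)$ levels, so it only yields that $\eta(x,y)$ and $[x,y]$ lie within distance $O\!\big(D\log d_X(x,y)\big)$ of each other, i.e.\ that geodesic triangles are merely \emph{sublinearly} slim, with slimness growing like the logarithm of the perimeter, whereas hyperbolicity demands a single constant valid for all triangles. The heart of the proof is therefore the self-improvement step upgrading this distance-dependent bound to a uniform one: I would take the furthest point of some $\eta(x,y)$ from $[x,y]$, at distance $\Delta$ say, apply condition~(2) with the third vertex chosen on $[x,y]$ at distance of order $\Delta$ from the nearest-point projection, and feed in the already-established sublinear bound on the resulting strictly shorter pairs, aiming to derive an inequality in which $\Delta$ is controlled by a definite fraction of itself plus $O(D)$. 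Such a contraction would force $\Delta=O(D)$ uniformly, giving the constant $M$ needed in the first paragraph. I expect setting up this extremal argument cleanly — and in particular handling the degenerate case where the chosen geodesic points run off the ends of $[x,y]$ — to be the most delicate part of the whole argument.
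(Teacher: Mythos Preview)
The paper does not supply its own proof of this proposition: it is quoted as a criterion due to Masur--Schleimer, with a reference to a one-page proof in Bowditch's \emph{Uniform hyperbolicity of the curve graphs}. So there is nothing in the paper to compare against beyond that citation. Your outline is precisely the standard strategy used in that reference: (i) iterated bisection via condition~(2), terminating with condition~(1), to obtain $\eta(x,y)\subseteq N_{O(D\log d(x,y))}([x,y])$; (ii) connectedness of $\eta(x,y)$ plus the bound from~(i) to get the reverse inclusion at the same scale; (iii) a bootstrap promoting this logarithmic bound to a uniform one. Your identification of step~(iii) as the crux is accurate.

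Two corrections to the bootstrap sketch. First, a \emph{single} application of condition~(2) with one vertex $z\in[x,y]$ does not suffice: whichever side of $z$ you place it on, one of the two pieces $\eta(x,z),\eta(z,y)$ still has endpoints at distance comparable to $d(x,y)$, and nothing prevents the nearby point $p'$ from landing there, which reproduces the original estimate. One needs \emph{two} applications, with flanking points $z_1,z_2\in[x,y]$ on either side of the nearest-point projection $q$ at distance $L$ a bit larger than the current bound $\Psi(n):=\sup_{d(x,y)\le n}\sup_{p\in\eta(x,y)}d(p,[x,y])$; then any $p'\in\eta(x,z_1)$ would lie within $\Psi(n)$ of $[x,z_1]$, hence within $\Psi(n)+2D$ of $p$, hence within $\Psi(n)+2D+\Delta\le 2\Psi(n)+2D$ of $q$, contradicting $d(q,[x,z_1])\ge L$. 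This forces $p'\in\eta(z_1,z_2)$. Second, the inequality one obtains is not a linear contraction $\Delta\le c\Delta+O(D)$ but rather
\[
\Psi(n)\;\le\;\Psi\big(2L\big)+2D\;\le\;\Psi\big(O(\Psi(n))\big)+O(D)\;\le\;O\big(D\log\Psi(n)\big)+O(D),
\]
the last step feeding in the a priori logarithmic bound from~(i). This already forces $\Psi(n)=O(D)$ uniformly in~$n$, with no further iteration needed. With these two adjustments (two flanking vertices, and the correct shape of the self-improving inequality) your plan is exactly the cited argument; the degenerate endpoint case you flag is handled by taking $z_i$ to be $x$ or $y$ when $q$ is too close to an endpoint, which simply removes the corresponding outer piece.
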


In our proof of Theorem~\ref{thm:hyp}, the sets $\eta(\cdot,
\cdot)$ will be spanned by the following curves.

\begin{defn}(Bicorn curves)
Let $a$ and $b$ be curves on $S$. Consider their
representatives on $S$ in minimal position (i.e.\ intersecting
a minimal number of times); we will call them also $a$ and $b$
slightly abusing our convention. Recall that minimal position
is unique up to isotopy.

A curve $c$ is a \emph{bicorn curve between $a$ and $b$} if
either $c=a$ or $c=b$ or $c$ is represented by the union of an
arc $a'$ of $a$ and an arc $b'$ of $b$, which we call the
\emph{$a$-arc} and the \emph{$b$-arc} of $c$. If $c=a$, then
its $a$-arc is $a$ and its $b$-arc is empty, similarly if
$c=b$, then its $b$-arc is $b$ and its $a$-arc is empty.
\end{defn}

Notice that whenever we have arcs $a'$ of $a$ and $b'$ of $b$
that only intersect at their endpoints, such arcs define a
(bicorn) curve which is essential, because $a$ and $b$ are in
minimal position.

Also, notice that, given $a$ and $b$, there are only finitely
many bicorn curves between $a$ and $b$.

\begin{lemma}\label{conn}
Let $a$ and $b$ be curves on $S$, and let $\eta(a,b)$ be the
full subgraph in $\calC_{\mathrm{aug}}(S)$ spanned by all
bicorn curves between $a$ and $b$. Then $\eta(a,b)$ is
connected.
 \end{lemma}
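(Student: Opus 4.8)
The plan is to show that any bicorn curve $c$ between $a$ and $b$ can be connected to $a$ (say) by a path in $\eta(a,b)$, which is enough since $\eta(a,b)$ is a full subgraph. The natural strategy is to perform a "surgery induction": given a bicorn curve $c$ with $a$-arc $a'$ and $b$-arc $b'$, produce another bicorn curve $c'$ that is simpler (e.g. has fewer intersections with $b$, or a shorter $b$-arc in terms of the number of intersection points of $b'$ with $a$) and that is adjacent to $c$ in $\calC_{\mathrm{aug}}(S)$, i.e.\ intersects $c$ at most twice. Iterating, we eventually reach $a$ itself, since decreasing the complexity must terminate, and the curve of smallest complexity (empty $b$-arc) is $a$. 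First I would set up the complexity: for a bicorn curve $c=a'\cup b'$, let $c$'s complexity be the number of points of $b'\cap a$ (interior intersection points of the $b$-arc with $a$), with $a$ itself having complexity $0$.

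The inductive step is where the real work lies. Given $c=a'\cup b'$ with $b'\cap a\neq\emptyset$, pick an intersection point $p$ of the interior of $b'$ with $a$. This point $p$ subdivides $b'$ into two subarcs; I want to use one of these subarcs together with a suitable arc of $a$ to build a new bicorn curve $c'$. Concretely, $p$ lies on $a$ and splits $a$ into arcs, and one can form the new bicorn curve by taking a sub-$b$-arc of $b'$ from one endpoint of $b'$ to $p$, then following $a$ back. One must check: (i) the resulting arcs meet only at endpoints, so $c'$ is genuinely a bicorn curve (essentiality is automatic by the remark in the excerpt, given minimal position); (ii) $c'$ has strictly smaller complexity than $c$, because its $b$-arc is a proper subarc of $b'$ and hence meets $a$ in strictly fewer points; and (iii) $c$ and $c'$ intersect at most twice. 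For (iii), the point is that $c$ and $c'$ share the whole sub-$b$-arc from the chosen endpoint of $b'$ to $p$ (so no transverse intersections there after isotoping off the shared arc), and the remaining pieces — a sub-arc of $a$ in one curve, a sub-arc of $b$ in the other — can cross only boundedly often; a careful local analysis near the shared arc and near $p$ and the common endpoint should bound this by $2$. I expect (iii), and in particular controlling intersections near the endpoint $p$ and the shared portion, to be the main obstacle, since one has to argue that after pushing the curves off their common arc the leftover arcs of $a$ and $b$ cross at most twice — this will use that $a$ and $b$ are in minimal position and that the two leftover arcs are subarcs of $a$ and of $b$ respectively.

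I would organize the write-up as follows. Fix the minimal-position representatives. If $c=a$ or $c=b$ the path has length $0$ or the single edge from $a$ to a bicorn to $\dots$; so assume $c$ is a genuine bicorn curve with nonempty $a$-arc and $b$-arc. Choose an extreme intersection point $p\in b'\cap a$, meaning a point closest along $b'$ to one of its endpoints $x$ (so that the subarc $b''$ of $b'$ from $x$ to $p$ has interior disjoint from $a$). Since $x$ is an endpoint of both $a'$ and $b'$, it lies on $a$; let $a''$ be the subarc of $a$ from $x$ to $p$ chosen so that $b''\cup a''$ bounds — one of the two choices of subarc of $a$ works, and $b''\cup a''$ is then a bicorn curve $c'$ whose $b$-arc $b''$ is a proper subarc of $b'$, hence lower complexity. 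Then verify $c\cap c'\le 2$: isotope to remove the shared arc $b''$, after which $c$ is represented by $(a'\cup(b'\setminus b''))$-type arcs and $c'$ by $a''$, and one checks directly that a subarc of $a$ and a subarc of $b$, in minimal position, with the endpoint incidences coming from the construction, cross at most twice (this is the local picture: $a''\subseteq a$ and $b'\setminus b''\subseteq b$ meet in minimal position, and the surgery near $x$ and $p$ contributes at most the two controlled crossings). By induction on complexity, $c$ is connected in $\eta(a,b)$ to $a$, proving connectedness.
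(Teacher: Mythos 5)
The crucial step (iii) --- that your surgered curve $c'$ meets $c$ at most twice --- is exactly where the proposal breaks down, and it is not just a missing ``careful local analysis'': the claim you lean on is false. Your $c'$ is built from the subarc $b''\subseteq b'$ running from the endpoint $x$ to the first crossing $p$ of the interior of $b'$ with $a$, together with an arc $a''\subseteq a$ from $x$ to $p$. Choosing $p$ by minimality \emph{along $b'$} only guarantees that the interior of $b''$ misses $a$; it gives no control over the remaining piece $b'\setminus b''$ of the $b$-arc of $c$, which is a subarc of $b$ that can cross the new arc $a''$ arbitrarily many times --- $a''$ is just some arc of $a$ between $x$ and $p$, and in general each of the two possible choices of $a''$ contains many points of $a\cap b$ lying on $b'\setminus b''$. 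The assertion that ``a subarc of $a$ and a subarc of $b$, in minimal position, cross at most twice'' is not true: $a$ and $b$ may have huge geometric intersection number, and nothing in your endpoint incidences prevents those crossings from landing between $a''$ and $b'\setminus b''$. So $c'$ need not be a neighbour of $c$ in $\calC_{\mathrm{aug}}(S)$, and the induction on your complexity does not get off the ground. (A smaller slip: complexity $0$ means the interior of the $b$-arc misses $a$, not that $c=a$; such a curve does intersect $a$ at most twice, so that part is easily repaired.)

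Compare with the paper's proof, which moves in the opposite direction, towards $b$, and --- this is the key mechanism --- chooses the new arc so that every non-shared piece of $c'$ meets $c$ only at endpoints. Concretely, it takes a \emph{minimal} arc $b''\supsetneq b'$ of $b$ with both endpoints on the $a$-arc $a'$ of $c$, and lets the $a$-arc of $c'$ be the subarc $a''\subseteq a'$ with the same endpoints; then $b'\subseteq b''$ and $a''\subseteq a'$ are shared with $c$, while the new pieces $b''\setminus b'$ meet $a'$ only at their endpoints by minimality, giving intersection number at most $1$ with $c$. If you insist on heading towards $a$, the correct analogue is to enlarge the $a$-arc: take a minimal arc $a''\supsetneq a'$ of $a$ with both endpoints on $b'$, and let the new $b$-arc be the subarc of $b'$ between those endpoints; it is minimality of the extension measured along $a$ (not your choice of the first crossing along $b'$) that bounds the intersections with $c$. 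As written, your construction optimises the wrong quantity, so the adjacency claim, and with it the whole induction, is unproved.
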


In the proof, we will use the following order on the set of
bicorn curves.

\begin{defn}
Fix curves $a,b$. For $c$ and $c'$ bicorn curves between $a$
and~$b$, we write $c<c'$ if the $b$-arc of $c'$ strictly
contains the $b$-arc of $c$.
\end{defn}

Note that since theoretically (but not \emph{de facto}) the
same bicorn curve $c$ might be represented in two different
ways as the union $a'\cup b'$ of an $a$-arc~$a'$ and a $b$-arc
$b'$, the order above is in fact an order on the
representatives of bicorn curves of form $a'\cup b'$.

\begin{proof}[Proof of Lemma \ref{conn}.] All bicorn curves in
this proof are bicorn curves between $a$ and $b$.

We claim that if $c$ is a bicorn curve and $c\neq b$, then
there exists a bicorn curve $c'$ such that $c< c'$ and $c'$ is
a neighbour of $c$ in $\calC_{{\mathrm{aug}}}(S)$. Since the
set of bicorn curves is finite, the claim implies that
$\eta(a,b)$ is connected.

\begin{figure}[h]
 \includegraphics[scale=0.9]{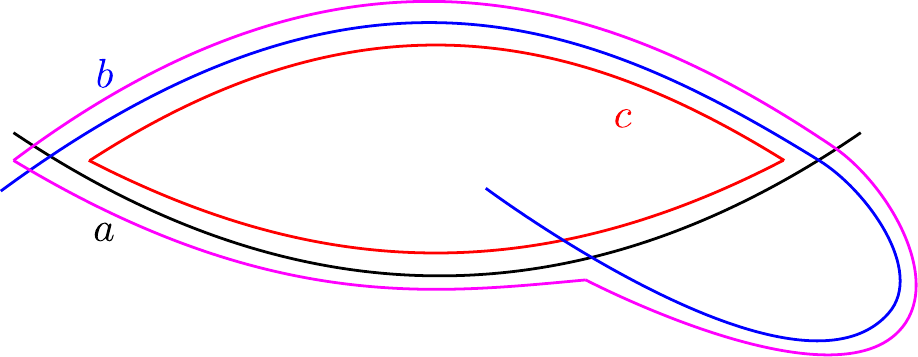}
\end{figure}

We now justify the claim. Let $c\neq b$ be a bicorn curve. First, if
$c$ and $b$ intersect at most twice, then we can take $c'=b$,
so let us assume that this is not the case.

If $c=a$, then let $b'$ be a minimal arc of $b$ whose both
endpoints lie in $a$. Let $c'$ be any of the two bicorn curves
defined by $b'$ and an arc of $a$ with the same endpoints. It
is easy to check that the intersection number of $a$ and $c'$
is at most 1, and clearly $c < c'$.

If $c\neq a$ with $a$-arc $a'$ and $b$-arc $b'$, then consider
a minimal arc $b''\supsetneq b'$ with both endpoints in $a'$.
The arc $b''$ and the subarc $a''$ of $a'$ with the same
endpoints as $b''$ define a new bicorn curve $c'$ with $c < c'$
and intersection number at most 1 with $c$. This justifies the
claim.
\end{proof}

The following lemma says that bicorn curve triangles are
1-slim.

\begin{lemma}\label{1thin}
Let $a,b$ and $d$ be curves and let $c$ be a bicorn curve
between $a$ and $b$. Then there is a bicorn curve $c'$ between
$a$ and $d$ or between $b$ and $d$ that intersects $c$ at most
twice.
\end{lemma}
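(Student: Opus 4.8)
I need to prove that bicorn curve triangles are 1-slim: given curves $a, b, d$ and a bicorn curve $c$ between $a$ and $b$, I must find a bicorn curve $c'$ between $a$ and $d$ or between $b$ and $d$ intersecting $c$ at most twice.

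Let me think about the structure. A bicorn curve $c$ between $a$ and $b$ is (in the interesting case) the union of an $a$-arc $a'$ and a $b$-arc $b'$. I want to "push" one of these arcs toward $d$.

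The natural idea: look at how $d$ intersects $c$. If $d$ and $c$ intersect at most twice, we're done (take $c' = d$, which is a bicorn curve between $a$ and $d$ trivially — wait, is $d$ a bicorn curve between $a$ and $d$? Yes, by the definition, $c = d$ is allowed). So assume $d$ intersects $c$ at least three times.

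Now I want to build a bicorn curve between, say, $a$ and $d$ (or $b$ and $d$) that uses a piece of $d$ and a piece of $a$ (or $b$), and has small intersection with $c = a' \cup b'$.

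**Key construction.**

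Consider the arcs that $d$ is cut into by $c$. Take one such arc $d'$ of $d$, with endpoints on $c$. These endpoints lie either on the $a$-arc $a'$ or on the $b$-arc $b'$ of $c$. Suppose both endpoints of $d'$ lie on $a'$ (this is one case; the other cases are symmetric-ish). Then I can close up $d'$ with the subarc $a''$ of $a'$ between those endpoints to form a closed curve $c' = d' \cup a''$. This is a bicorn curve between $a$ and $d$ (provided it's essential — I should check this is automatic because... hmm, actually I need $a, d$ in minimal position; $d'$ is a subarc of $d$, $a''$ a subarc of $a$, and they meet only at endpoints, so yes by the remark after the definition this is an essential bicorn curve).

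Now how many times does $c'$ intersect $c = a' \cup b'$? The arc $a''$ is part of $a'$, so it intersects $a'$ zero times (it's a subarc) — wait, no, $a''$ is literally contained in $a'$, so $a'$ and $a''$ are disjoint after isotopy? No — they're the same curve pieces, I can take $a''$ with no interior intersections with $a'$. And $a''$ vs $b'$: since $a'$ and $b'$ meet only at the two "corner" points of $c$, and $a'' \subseteq a'$, the arc $a''$ meets $b'$ in at most those 2 corners, but generically we can isotope so $a''$ is disjoint from $b'$ except we must be careful at the corners. Then $d'$ vs $c$: the arc $d'$ is one of the complementary arcs of $d \setminus c$, so its interior is disjoint from $c$; only its two endpoints lie on $c$. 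So $c'$ meets $c$ essentially only near the two endpoints of $d'$ — giving intersection number at most 2. That's exactly what I want.

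**The main obstacle.**

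The subtle case is when the endpoints of every complementary arc $d'$ of $d \setminus c$ are "mixed" — one on $a'$ and one on $b'$ — or when I need to choose $d'$ carefully so that the resulting $c'$ is genuinely a bicorn curve between one of the pairs. If $d'$ has one endpoint on $a'$ and one on $b'$, then closing it up with a subarc of $c$ going through a corner gives an arc that is part-$a'$-part-$b'$, not a single $a$-arc or $b$-arc, so $c'$ would not be a bicorn curve between $a$ and $d$ in the required sense. I expect this is handled by a counting/pigeonhole argument: since $d$ meets $c$ at least 3 times, and $c$ has only 2 corners, there must be some complementary arc $d'$ both of whose endpoints lie on the same one of $a'$, $b'$ — OR, even if all arcs are "mixed," one can still extract a sub-configuration. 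Let me think: if $d$ meets $c$ in $k \geq 3$ points, cyclically around $c$ these points partition $c$ into arcs; the corners split $c$ into $a'$ and $b'$. If $k \geq 3$, at least two of the $k$ intersection points lie on the same side (say on $a'$), hence are connected by a subarc of $a'$ not through a corner, and... but I need a subarc of $d$ between two points on $a'$. Hmm — actually the right move: among the $k$ intersection points, pick two consecutive ones (along $d$) that lie on the same side. Pigeonhole on "which side" over $k \geq 3$ consecutive-along-$d$ points: if they alternated perfectly $a', b', a', b', \ldots$ around $d$... but a closed curve $d$ crossing $c$ must return, and the pattern of sides along $d$ — with only 2 sides and $k \geq 3$ crossings — cannot be strictly alternating if $k$ is odd, and even if $k$ is even and alternating, consecutive crossings are on opposite sides. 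This needs more care.

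**Refined plan.**

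Rather than fight the combinatorics abstractly, I'll argue: if $d$ meets $c$ at least $3$ times, consider an innermost arc $d'$ of $d$ cut off by $c$ — more precisely, look at the arcs of $d \cap (\text{the disk/annulus picture})$. Actually the cleanest approach: I would take $d'$ to be a subarc of $d$ between two of its intersection points with $c$ that are \emph{consecutive along $c$} (not along $d$). If those two points are consecutive along $c$ and both on the $a$-arc $a'$ (which happens as long as some pair of $c$-consecutive crossing points both lie on $a'$ — guaranteed when $\geq 3$ crossings exist, since otherwise crossings and corners would have to alternate around $c$, forcing $\leq 2$ crossings), then close $d'$ up with the short arc of $a'$ between them: the resulting curve $c'$ meets $c$ at most twice (interior of that short arc of $a'$ has no crossings with $c$, and $d'$ meets $c$ only at its endpoints plus possibly interior crossings — hmm, $d'$ may have interior crossings with $c$).

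OK here is the honest difficulty: a subarc $d'$ of $d$ between two crossing points will generally have \emph{interior} crossings with $c$. So I cannot just pick any subarc. I \emph{must} pick a complementary arc of $d \setminus c$ (interior disjoint from $c$), and then I'm back to: does some complementary arc have both endpoints on the same side? The resolution: a complementary arc $d'$ of $d \setminus c$ has its two endpoints on $c$; as I traverse $d$ I cross $c$ transversally, so the cyclic sequence of crossing points along $d$ — for $d'$ to have endpoints on opposite sides ($a'$ vs $b'$) for \emph{every} complementary arc, the crossings would alternate side along $d$; but "side" here is not a consistent notion along $d$... Let me instead note: the crossing points, viewed along $c$, have a well-defined side (a point of $a'$'s interior is on the "$a'$ side", etc.), and a corner is where it switches. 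Going once around $c$ we pass through the $2$ corners and the $k \geq 3$ crossings in some cyclic order. If no two $c$-consecutive crossings are on the same side, then between any two $c$-consecutive crossings there is a corner — but there are only $2$ corners and $\geq 3$ "gaps between consecutive crossings", contradiction. Hence two $c$-consecutive crossings $p, q$ lie on the same arc, say $a'$, with the subarc $\alpha$ of $a'$ from $p$ to $q$ containing no other crossing. Now I still need a complementary arc of $d$ from $p$ to $q$: take the complementary arc $d'$ of $d \setminus c$ starting at $p$ going into the side where $\alpha$ sits; it ends at some crossing point — if it ends at $q$, great: $c' = d' \cup \alpha$ meets $c$ at most twice (once near $p$, once near $q$) and is a bicorn curve between $a$ and $d$. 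If it ends elsewhere, I recurse / pick a different pair.

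I will present the proof along these lines: reduce to $d$ meeting $c$ at least thrice; locate two $c$-consecutive crossing points $p, q$ on the same arc of $c$ (pigeonhole on the $2$ corners); take the complementary subarc $d'$ of $d$ emanating from $p$ on the appropriate side and use that its first return to $c$, combined with a subarc of $c$, yields the desired $c'$ with at most two intersections with $c$. The main obstacle, as flagged, is bookkeeping the interior-crossing issue and confirming in each sub-case that the constructed $c'$ is an essential bicorn curve between the right pair and genuinely meets $c$ at most twice; I expect this to require a short case analysis mirroring the figure the authors include (the "succ.pdf" picture).
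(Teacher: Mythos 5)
Your reduction (take $c'=d$ unless $d$ meets $c$ at least three times) and your target construction (close up an arc $d'$ of $d$ having both endpoints on the $a$-arc $a'$, resp.\ the $b$-arc $b'$, of $c$ by the subarc of $a'$, resp.\ $b'$, between them) match the paper, but the crucial step---actually producing a usable $d'$---is where your argument has a genuine gap. You insist that $d'$ be a complementary arc of $d\setminus c$, i.e.\ with interior disjoint from $c$, and then you need such an arc whose two endpoints lie on the same arc of $c$. That need not exist: the crossings of $d$ with $c$ can alternate along $d$ between points of $a'$ and points of $b'$, in which case \emph{every} complementary arc of $d\setminus c$ is ``mixed''. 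Your fallback (pick two $c$-consecutive crossings $p,q$ on the same arc, follow the complementary arc of $d$ leaving $p$, and ``recurse / pick a different pair'' if it does not return at $q$) is not an argument---there is no guarantee the recursion ever succeeds, precisely because of the alternating configuration---and you explicitly leave the concluding case analysis undone.

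The paper's resolution is to drop the over-constraint that the interior of $d'$ avoid $c$ altogether. Take a \emph{minimal} subarc $d'$ of $d$ whose two endpoints both lie in $a'$ or both lie in $b'$ (such subarcs exist by your own pigeonhole: at least two of the $\geq 3$ crossing points lie on the same arc of $c$). Minimality forces the interior of $d'$ to miss $a'$ (say) entirely and to cross $b'$ at most once---one interior crossing is harmless, because the bound you are after is two, not zero. Then $c'=d'\cup a''$, with $a''$ the subarc of $a'$ sharing endpoints with $d'$, is a bicorn curve between $a$ and $d$ (the arcs meet only at their endpoints, and essentiality follows since $a$ and $d$ are in minimal position), and it meets $c$ at most twice: at most once from the interior crossing with $b'$ and at most once from resolving the shared arc $a''$. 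So the skeleton of your proposal is right, but without the ``minimal arc, allowed to cross the other side once'' idea the proof does not go through as written.
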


\begin{proof}
If the intersection number of $c$ and $d$ is at most $2$, then
we can take $c'=d$. Otherwise, again slightly abusing the
notation, we consider representatives $a,b$ and $d$ on $S$
pairwise in minimal position.

We claim that there is an arc $d'$ of $d$ intersecting $a'$ (or
$b'$) only at its endpoints and either intersecting $b'$
(resp.\ $a'$) at most once, or intersecting it exactly twice:
at the endpoints. Indeed, to justify the claim it suffices to
take the minimal arc $d'$ of $d$ with both endpoints in $a'$ or
both endpoints in $b'$; such an arc exists since $d$ intersects
$c$ at least $3$ times.

\begin{figure}[h]
 \includegraphics[scale=0.9]{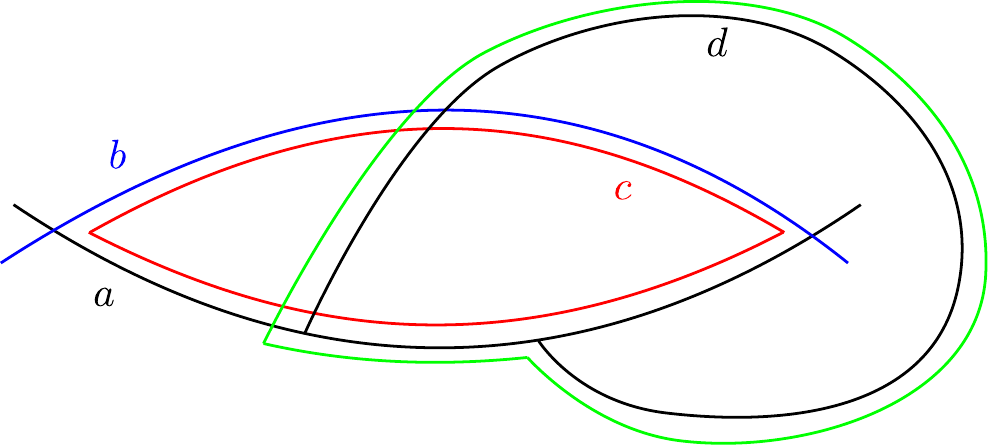}
\end{figure}

Let $d'$ be the arc guaranteed by the claim and assume without
loss of generality that its endpoints lie in $a'$. Then $d'$
and the subarc of $a'$ with the same endpoints as $d'$ define a
bicorn curve $c'$ between $a$ and $d$, and $c'$ intersects $c$
at most twice, as desired.
\end{proof}

\begin{proof}[Proof of Theorem \ref{thm:hyp}]
For vertices $a,b$ of $\calC_{\mathrm{aug}}(S)$, we define
$\eta(a,b)$ to be the full subgraph spanned in
$\calC_{\mathrm{aug}}(S)$ by the bicorn curves between~$a$ and~$b$, as in Lemma~\ref{conn}. Clearly, $\eta(a,b)$ contains $a$
and $b$. By Lemma~\ref{conn}, the subgraph $\eta(a,b)$ is
connected, as required in Proposition~\ref{guessgeod}.

Hypothesis~(1) of Proposition~\ref{guessgeod} is obviously
satisfied with $D=1$. Hypothesis~(2) of
Proposition~\ref{guessgeod} is satisfied with $D=1$ by
Lemma~\ref{1thin}. Thus by Proposition~\ref{guessgeod}, the
graph $\calC_{\mathrm{aug}}(S)$  is hyperbolic.
\end{proof}

\section{Pseudo-Anosovs are loxodromic WPD}\label{loxWPD}
\subsection{Notation}

Throughout the section we fix a pseudo-Anosov homeomorphism
$\phi:S\to S$, where $S$ is a closed surface. We will use some
well-known facts about pseudo-Anosovs discovered by
Thurston~\cite{Th-pA}. Recall that $\phi$ comes with a
one-parameter family of CAT(0) Euclidean metrics with
singularities $\{d_t\}_{t\in \R}$ on $S$. We denote the length
of the path $\alpha$ with respect to the metric $d_t$ by
$l_t(\alpha)$. The metrics $\{d_t\}$ have the following
properties:
\begin{itemize}
 \item the finitely many singularities of the metrics
     coincide and they are invariant under $\phi$.
 \item the metrics all have the same geodesics, up to reparametrisation.
 \item the push-forward of $d_t$ by $\phi$ is $d_{t+1}$.
 \item $S$ has two transverse singular foliations, called the horizontal and the vertical foliation, that form an angle of $\pi/2$ at every non-singular point with respect to any Euclidean structure $d_t$.
 \item $\phi$ preserves the horizontal and the vertical foliation, and there exists $\lambda>1$ so that if $\alpha$ is a subpath of the horizontal (resp.\ vertical) foliation then $l_t(\alpha)=\lambda^{t-t'}l_{t'}(\alpha)$ (resp. $l_t(\alpha)=(1/\lambda)^{t-t'}l_{t'}(\alpha)$).
 \item any half-leaf of either foliation is dense in $S$.
\end{itemize}

A \emph{saddle connection} is a geodesic on $S$ whose
intersection with the singular set consists of its endpoints.
Note that a saddle connection cannot be contained in the
horizontal or the vertical foliation. The \emph{balanced time} $\beta(\gamma)$ of a saddle connection
$\gamma$ is the only $t\in \R$ such that $l_t(\gamma)$ is
minimal, or, equivalently, the only $t$ such that $\gamma$
forms an angle of $\pi/4$ with the horizontal and vertical
foliations with respect to the metric $d_t$ at any point in its
interior.

\begin{defn}\label{baltime}
Given a curve $c$, we denote by $\calS(c)$ the set of all
saddle connections contained in the geodesic representative of
$c$ with respect to a (hence any) metric $d_t$, each counted
with multiplicity.

 Moreover, we denote by $\beta(c)$ the average over all $\gamma\in\calS(c)$ (counted with multiplicity) of the balanced time of $\gamma$.
\end{defn}

\begin{rem}\label{equivariance}
  The map $\beta$ is clearly $\phi$-equivariant, meaning that for every curve $c$ we have $\beta(\phi(c))=\beta(c)+1$.
\end{rem}

In fact, due to Lemmas~\ref{nocrosssamebal}
and~\ref{saddledontcross}, any function $\beta(c)$ with values
between $\min_{\gamma\in \calS(c)}\beta(\gamma)$ and
$\max_{\gamma\in \calS(c)}\beta(\gamma)$ satisfying
Remark~\ref{equivariance} would work for our purposes.

We will be interested in points of transverse intersection of
pairs of saddle connections. Note that an intersection point of
the saddle connections $\gamma_1$ and $\gamma_2$ is not a point
of transverse intersection if and only if it is either a common
endpoint of $\gamma_1$ and $\gamma_2$ or an interior point of
$\gamma_1=\gamma_2$.

\subsection{Preliminary lemmas}

In both lemmas below, part 2) will be a (technical)
generalisation of part 1). We will use parts 1) to show that
$\phi$ is loxodromic and part 2) to show that it is WPD. Hence,
on first reading, the reader may wish to read parts 1), then
the proof that $\phi$ is loxodromic and only afterwards move on
to parts 2) and the proof that $\phi$ is WPD.

\begin{lemma}\label{nocrosssamebal}
There exists a constant $C$ so that
\begin{enumerate}
 \item if the saddle connections $\gamma_1$ and $\gamma_2$ do not have points of transverse intersection, then $|\beta(\gamma_1)-\beta(\gamma_2)|\leq C$.
 \item for each $M$ there exists $D$ with the following
     property. If the saddle connections $\gamma_1$ and
     $\gamma_2$ satisfy
     $|\beta(\gamma_1)-\beta(\gamma_2)|\geq C$ and
     $l_{\beta(\gamma_1)}(\gamma_2)\geq D$, then $\gamma_1$
     and $\gamma_2$ have at least $M$ points of transverse
     intersection.
\end{enumerate}
\end{lemma}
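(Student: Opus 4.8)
The plan is to pass to holonomy coordinates for the flat structures. For a saddle connection $\gamma$ let $(H_t,V_t)$ be its horizontal and vertical holonomy components with respect to $d_t$; the listed properties give $H_t=\lambda^t H_0$ and $V_t=\lambda^{-t}V_0$, so $l_t(\gamma)^2=\lambda^{2t}H_0^2+\lambda^{-2t}V_0^2$, which is minimised at the $\beta(\gamma)$ with $\lambda^{2\beta(\gamma)}=|V_0/H_0|$, where $|H_{\beta(\gamma)}|=|V_{\beta(\gamma)}|=l_{\beta(\gamma)}(\gamma)/\sqrt2$. Writing $r=l_{\beta(\gamma)}(\gamma)/\sqrt2$ one gets $l_t(\gamma)^2=r^2\bigl(\lambda^{2(t-\beta(\gamma))}+\lambda^{-2(t-\beta(\gamma))}\bigr)$, hence the elementary bound
\[
l_t(\gamma)\ \ge\ \tfrac1{\sqrt2}\,l_{\beta(\gamma)}(\gamma)\,\lambda^{|t-\beta(\gamma)|}.
\]
I would also record a uniform positive lower bound $l_{\beta(\gamma)}(\gamma)\ge\epsilon_0$ over all saddle connections: applying a power of $\phi$ (which shifts all balanced times by an integer and preserves balanced lengths) we may assume $\beta(\gamma)\in[0,1)$, and then $l_0(\gamma)^2/l_{\beta(\gamma)}(\gamma)^2=\tfrac12(\lambda^{-2\beta(\gamma)}+\lambda^{2\beta(\gamma)})\le\tfrac12(1+\lambda^2)$, so $l_{\beta(\gamma)}(\gamma)$ is bounded below by a fixed multiple of the infimum of $d_0$-lengths of saddle connections, which is positive.

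The main input is the classical intersection inequality attached to the two transverse invariant foliations $\mathcal F^{\mathrm h},\mathcal F^{\mathrm v}$ of $\phi$: for any two arcs or curves $\alpha,\alpha'$ one has
\[
i(\alpha,\alpha')\cdot i(\mathcal F^{\mathrm h},\mathcal F^{\mathrm v})\ \ge\ \bigl|\,i(\alpha,\mathcal F^{\mathrm v})\,i(\alpha',\mathcal F^{\mathrm h})-i(\alpha,\mathcal F^{\mathrm h})\,i(\alpha',\mathcal F^{\mathrm v})\,\bigr|\ -\ O(1),
\]
where the constants, including $A:=i(\mathcal F^{\mathrm h},\mathcal F^{\mathrm v})$, depend only on the flat structure; the $O(1)$ is needed only because our $\gamma_i$ are arcs between singularities rather than closed curves, and the whole inequality can alternatively be extracted from Thurston's unique ergodicity of $\mathcal F^{\mathrm h},\mathcal F^{\mathrm v}$ by a uniform equidistribution argument. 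For a saddle connection $\gamma$ the relevant transverse measures are just the holonomy components, $i(\gamma,\mathcal F^{\mathrm h})=|V_t(\gamma)|$ and $i(\gamma,\mathcal F^{\mathrm v})=|H_t(\gamma)|$, and the resulting numerator $|H_t(\gamma_1)V_t(\gamma_2)-V_t(\gamma_1)H_t(\gamma_2)|$ is independent of $t$.

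With this I would prove part (2) with the constant $C:=1$. Fix $M$, write $\beta_i=\beta(\gamma_i)$, assume $|\beta_1-\beta_2|\ge 1$ and $\ell_2:=l_{\beta_1}(\gamma_2)\ge D$, and evaluate holonomies in $d_{\beta_1}$. There $\gamma_1$ makes angle $\pi/4$, so $|H_{\beta_1}(\gamma_1)|=|V_{\beta_1}(\gamma_1)|=l_{\beta_1}(\gamma_1)/\sqrt2\ge\epsilon_0/\sqrt2$, while $\gamma_2$ satisfies $|V_{\beta_1}(\gamma_2)/H_{\beta_1}(\gamma_2)|=\lambda^{2(\beta_2-\beta_1)}$. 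If $\beta_2>\beta_1$ then $|V_{\beta_1}(\gamma_2)|\ge\ell_2/\sqrt2$ and $|H_{\beta_1}(\gamma_2)|\le\lambda^{-2}|V_{\beta_1}(\gamma_2)|$, so the numerator equals $|H_{\beta_1}(\gamma_1)|\bigl(|V_{\beta_1}(\gamma_2)|-|H_{\beta_1}(\gamma_2)|\bigr)\ge\tfrac{\epsilon_0(1-\lambda^{-2})}{2}\,\ell_2\ge\tfrac{\epsilon_0(1-\lambda^{-2})}{2}\,D$; the case $\beta_2<\beta_1$ is the mirror image (swap horizontal and vertical) and gives the same bound. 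Hence $i(\gamma_1,\gamma_2)\ge\tfrac{\epsilon_0(1-\lambda^{-2})}{2A}\,D-O(1)$, which exceeds $M+2$ once $D\ge D(M)$; since $\beta_1\ne\beta_2$ we have $\gamma_1\ne\gamma_2$, so at most two of these intersection points are non-transverse (common endpoints), and $\gamma_1,\gamma_2$ have at least $M$ transverse intersections. Part (1) then follows: if $\gamma_1,\gamma_2$ have no transverse intersection, part (2) with $M=1$ rules out having both $|\beta_1-\beta_2|\ge1$ and $l_{\beta_1}(\gamma_2)\ge D(1)$, while the displayed estimate of the first paragraph gives $l_{\beta_1}(\gamma_2)\ge\tfrac{\epsilon_0}{\sqrt2}\lambda^{|\beta_1-\beta_2|}$; in either case $|\beta_1-\beta_2|$ is at most an explicit constant, and replacing $C$ by it (which only weakens part (2)) establishes both parts with one $C$.

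The genuinely nontrivial step is the intersection inequality of the second paragraph: making it precise for arcs running between singularities of the flat structure rather than for closed curves — equivalently, carrying out the unique-ergodicity/equidistribution estimate uniformly over the non-compact family of admissible $\gamma_1$ — and keeping track of the resulting additive constant. (One cannot replace this by a purely metric ``length plus angle'' argument, since long geodesics in a disjoint subsurface show that length alone does not force intersections; it is the steepness of $\gamma_2$ at time $\beta_1$ together with density of leaves that does the work.) Everything else is bookkeeping with the holonomy formula, and the argument reduces both parts of the lemma to the single computation of $|H_t(\gamma_1)V_t(\gamma_2)-V_t(\gamma_1)H_t(\gamma_2)|$.
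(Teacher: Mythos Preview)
Your approach is genuinely different from the paper's. The paper normalises so that $\beta(\gamma_1)\in[0,1]$, notes that $\gamma_1$ then has length and angles with both foliations bounded below by some $\epsilon>0$, and invokes a single qualitative fact coming from compactness of $S$ and density of half-leaves: any sufficiently long Euclidean geodesic making a sufficiently small angle with one of the foliations must transversally cross every Euclidean geodesic satisfying the $\epsilon$-bounds. Part~(1) follows immediately (large $|\beta(\gamma_2)|$ forces $\gamma_2$ into this regime), and part~(2) follows by chopping a long $\gamma_2$ into many such pieces. No intersection-pairing formula is used.

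Your holonomy bookkeeping is correct and gives clean explicit control of lengths and slopes, and your derivation of part~(1) from part~(2) is tidy. However, the substance of the lemma has been pushed into the ``classical intersection inequality'' for arcs, which you rightly flag as the nontrivial step and do not prove. For closed curves something of this shape can be extracted from the theory of measured foliations, but for saddle connections the proof with a uniform additive error would itself rest on a uniform equidistribution/density argument for leaves --- essentially the same compactness-plus-density input the paper uses directly. So your route repackages rather than bypasses the geometric core. (Two minor points: you conflate the signed holonomies $H_t,V_t$ with the unsigned transverse measures $i(\gamma,\mathcal F^{\mathrm h}),i(\gamma,\mathcal F^{\mathrm v})$ --- harmless here since saddle connections are straight, but ``the numerator equals'' should read ``is at least''; and your ``$|V|\ge\ell_2/\sqrt2$'' needs $|V|^2/(|H|^2+|V|^2)\ge 1/(1+\lambda^{-4})$ rather than $1/2$, which still gives a definite constant.) What your approach would buy, once the inequality is established with constants, is an explicit linear dependence of the transverse intersection count on $D$; what the paper's approach buys is a short, entirely self-contained argument with no external input beyond density of half-leaves.
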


\begin{proof}
We will say that a geodesic in a (hence any) metric $d_t$ is
\emph{Euclidean} if it intersects the singular set at most at
the endpoints. Notice that such a geodesic forms a well-defined
angle with the horizontal and vertical foliation with respect
to any metric $d_t$.

Let $\gamma_1$ and $\gamma_2$ be saddle connections. From now
on, lengths and angles will be measured in the metric $d_0$.

Up to using the action of $\phi$, we can assume that the
balanced time of $\gamma_1$ is within $0$ and $1$, so that (in
the metric $d_0$) the angle that $\gamma_1$ forms with both the
horizontal and the vertical foliation is bounded below by some
$\epsilon>0$ depending only on the pseudo-Anosov $\phi$. Also,
by the discreteness of the singular set, up to decreasing
$\epsilon$ we can assume that the length of $\gamma_1$ is $\geq
\epsilon$.

Now, by the compactness of $S$ and since half-leaves of both
foliations are dense, there exists $\delta>0$ so that any
Euclidean geodesic $\gamma$ with length $\geq 1/\delta$ forming
an angle $\leq \delta$ with either the horizontal or vertical
foliation intersects transversally
every
Euclidean geodesic of length $\geq \epsilon$ that forms an
angle $\geq \epsilon$ with both foliations.

We can now prove 1). Whenever $|\beta(\gamma_2)|$ is large
enough, the angle that $\gamma_2$ forms with one of the
foliations is $\leq \delta$, and by the discreteness of the
singular set the length of $\gamma_2$ is $\geq 1/\delta$. We can
thus apply the above statement with $\gamma=\gamma_2$ and
conclude that $\gamma_2$ intersects $\gamma_1$ transversally,
as desired.

In order to prove 2), notice that whenever $|\beta(\gamma_2)|$
is large enough and $\gamma_2$ is sufficiently long, then we
can split it into several $\gamma$'s as above.
\end{proof}

\begin{lemma}\label{saddledontcross}
\begin{enumerate}
 \item  If the curves $c_1$ and $c_2$ are disjoint
     (including the case $c_1=c_2$) then no saddle
     connection of $\calS(c_1)$ intersects transversally a
     saddle connection of $\calS(c_2)$.
 \item Let $c_1$ and $c_2$ be curves and let
     $\gamma^1_1,\dots,\gamma^k_1\in\calS(c_1),\gamma_2\in\calS(c_2)$
     be saddle connections so that $\gamma^i_1$ and
     $\gamma_2$ intersect transversally at $M_i$ points.
     Then the intersection number of $c_1$ and $c_2$ is at
     least $\sum M_i$.
\end{enumerate}
\end{lemma}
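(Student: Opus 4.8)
The plan is to realize all the saddle connections simultaneously in a single metric $d_t$, where geodesics have the convexity properties of CAT(0) geometry, and then argue that transverse intersections of saddle connections cannot be removed by isotopy. For part 1), suppose $c_1$ and $c_2$ are disjoint and take their geodesic representatives in the metric $d_0$ (recall the geodesics are the same, up to reparametrization, for all $d_t$). If some $\gamma_1\in\calS(c_1)$ met some $\gamma_2\in\calS(c_2)$ transversally, then the geodesic representatives of $c_1$ and $c_2$ would cross transversally at a point, hence intersect. But two disjoint curves have disjoint geodesic representatives: in a CAT(0) metric (with singularities) the geodesic representative in a free homotopy class is unique, and if two curves can be homotoped off each other then their geodesic representatives are already disjoint — otherwise a transverse intersection point would give a bigon or a more complicated innermost disk, contradicting uniqueness of geodesics (two geodesic arcs in a CAT(0) space cannot bound a bigon). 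The case $c_1=c_2$ is similar: a curve's geodesic representative is embedded, so distinct saddle connections of $\calS(c_1)$ meet only at singular endpoints, which by definition are not transverse intersections. The one subtlety is that the $d_t$ are only CAT(0) away from the cone points; but the cone angles at the singularities are $\geq \pi$ (they are multiples of $\pi$, in fact $\geq 3\pi/2$ typically), so the metric is still CAT(0) globally and the no-bigon argument goes through verbatim.

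For part 2), I would combine part 1) with the observation that transverse intersections in minimal position compute the geometric intersection number. Concretely: the geodesic representatives of $c_1$ and $c_2$ in $d_0$ are in minimal position, because geodesics in a CAT(0) metric realize the minimal number of intersections in their homotopy classes (again by the no-bigon principle). Therefore the total number of transverse intersection points between the geodesic representative of $c_1$ and that of $c_2$ equals $i(c_1,c_2)$, the geometric intersection number. Now the geodesic representative of $c_1$ is the concatenation of the saddle connections in $\calS(c_1)$ (with multiplicity), and similarly for $c_2$. Each transverse crossing of $\gamma^i_1$ with $\gamma_2$ is a transverse crossing of the geodesic representative of $c_1$ with that of $c_2$, and these are all distinct points for distinct $i$ since the $\gamma^i_1$ overlap only at singular points (which are never transverse intersection points). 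Summing, $i(c_1,c_2)\geq \sum_i M_i$. Since the intersection number of $c_1$ and $c_2$ as curves is by definition $i(c_1,c_2)$, we are done.

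The main obstacle I anticipate is making precise the claim that geodesic representatives in these singular-flat metrics are unique, embedded, and pairwise in minimal position — i.e., that the standard CAT(0)/hyperbolic-surface folklore about geodesic representatives transfers to metrics with cone points of angle $\geq \pi$. This is where the "no bigon between geodesics" lemma is invoked, and one must be slightly careful at the cone points (a bigon could a priori have a vertex at a singularity). But since a geodesic bigon in a CAT(0) space is degenerate, and the singular flat metric is CAT(0), there is genuinely nothing to remove: any innermost disk bounded by subarcs of the two geodesics would be a CAT(0) bigon, forcing the two subarcs to coincide, contradicting transversality. Once this is in hand, both parts are short. I would phrase part 2) so that it literally specializes to part 1) when $k=1$ and $\gamma_2$ is required to be disjoint (so $M_1=0$), keeping the exposition unified as the authors evidently intend.
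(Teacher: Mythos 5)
Your overall strategy---transverse crossings of geodesics in the singular flat CAT(0) metric cannot be undone by isotopy---is in the same spirit as the paper's, but several of your intermediate claims are false in the singular-flat setting, and they sit exactly where the content of the lemma lies. Geodesic representatives of disjoint simple closed curves need \emph{not} be disjoint: they can pass through the same cone point and can even overlap along common saddle connections. Likewise the geodesic representative of a single simple closed curve need not be embedded: it can traverse the same saddle connection twice (for instance when the curve is isotopic to the boundary of a neighbourhood of a union of saddle connections). What is true in general is only the absence of \emph{transverse} crossings---which is essentially the statement of part 1), so as written you are close to assuming what is to be proved. Relatedly, the bigon/innermost-disk criterion you invoke presupposes that the two representatives are transverse; when they overlap along saddle connections or meet tangentially at singularities, an ``innermost region'' need not be a bigon bounded by two arcs meeting at two transverse points, so the no-bigon principle does not apply verbatim. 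The paper sidesteps all of this by lifting to the universal cover of $(S,d_0)$, a CAT(0) space quasi-isometric to $\mathbb{H}^2$: two geodesic lines that cross transversally cross exactly once and have linked endpoint pairs at infinity, and linking at infinity is a homotopy-invariant obstruction forcing \emph{any} representatives of the two curves to intersect.

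Part 2) has a further genuine gap: $\calS(c_1)$ is counted with multiplicity, so distinct indices $i$ may label the same saddle connection, in which case the transverse intersection points of $\gamma^i_1$ with $\gamma_2$ for different $i$ are the \emph{same} points of the surface; your count of ``distinct points'' then falls short of $\sum M_i$, while the application in Proposition~\ref{pAWPD} needs the full sum. (Your stronger claim that the number of transverse intersection points of the geodesic representatives \emph{equals} the intersection number is also problematic for the same non-transversality reasons, though only the inequality is needed.) The paper handles both issues at once: it chooses $M=\sum M_i$ lifts $\tilde\alpha^1_1,\dots,\tilde\alpha^M_1$ of the geodesic representative of $c_1$, lying in distinct orbits of the stabiliser of the lift $\tilde\alpha_2$, each crossing $\tilde\alpha_2$ transversally and hence with linked endpoints at infinity; each orbit then forces a separate essential intersection, giving intersection number at least $M$ for any representatives. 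If you replace your minimal-position/bigon step by this lifting-and-linking argument, the proof goes through.
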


\begin{proof}
Let us prove 1) first.  Let $\alpha_1$ and $\alpha_2$ be the
geodesic representatives of $c_1$ and $c_2$. Suppose that
$\alpha_1$ contains a saddle connection that intersects
transversally a saddle connection contained in $\alpha_2$. We
can lift $\alpha_1$ and $\alpha_2$ to the universal cover of
$(S,d_0)$ to two bi-infinite geodesics $\tilde\alpha_1$ and
$\tilde\alpha_2$ that intersect transversally at a point. The
universal cover of $(S,d_0)$ is a CAT(0) space quasi-isometric
to $\mathbb{H}^2$. Thus $\tilde\alpha_1$ and $\tilde\alpha_2$
intersect exactly once and moreover the points at infinity of
$\tilde\alpha_1$ separate the points at infinity of
$\tilde\alpha_2$. Thus any representatives of
$c_1$ and $c_2$ intersect, a contradiction.

To prove 2), we can proceed similarly and this time find
$M=\sum M_i$ lifts $\tilde\alpha^1_1,\dots,
\tilde\alpha^M_1$ each intersecting transversally
$\tilde\alpha_2$, in distinct orbits of the stabiliser of
$\tilde\alpha_2$ in $\pi_1(S)$ (regarded as the group of
deck transformations). The conclusion
follows.
\end{proof}

\subsection{Pseudo-Anosovs are loxodromic}

\begin{prop}\label{balcorselip}
 The balanced time map $\beta:\calC(S)^{(0)}\to \R$ from Definition \ref{baltime} is coarsely Lipschitz, namely there exists $L\geq 1$ so that for all curves $c_1$ and $c_2$ we have
 $$|\beta(c_1)-\beta(c_2)|\leq Ld_{\calC(S)}(c_1,c_2).$$
\end{prop}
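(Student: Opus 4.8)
The plan is to reduce everything to the case $d_{\calC(S)}(c_1,c_2)=1$, i.e.\ to the case of two disjoint curves, and then invoke the combination of Lemmas \ref{nocrosssamebal}(1) and \ref{saddledontcross}(1). More precisely, I would first show that there is a uniform constant $L$ with $|\beta(c_1)-\beta(c_2)|\leq L$ whenever $c_1$ and $c_2$ are disjoint (this includes $c_1=c_2$, in which case the bound is $0$). Granting this, take an arbitrary pair $c_1,c_2$, let $n=d_{\calC(S)}(c_1,c_2)$, and pick a geodesic edge-path $c_1=e_0,e_1,\dots,e_n=c_2$ in $\calC(S)$. Applying the disjoint-curve bound to each consecutive pair and using the triangle inequality gives $|\beta(c_1)-\beta(c_2)|\leq\sum_{i=1}^{n}|\beta(e_{i-1})-\beta(e_i)|\leq Ln=Ld_{\calC(S)}(c_1,c_2)$, which is exactly the claim (and we may of course enlarge $L$ so that $L\geq 1$).

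So the heart of the matter is the disjoint case. Let $c_1,c_2$ be disjoint curves. By Lemma \ref{saddledontcross}(1), no saddle connection of $\calS(c_1)$ crosses transversally any saddle connection of $\calS(c_2)$. Hence, for every $\gamma_1\in\calS(c_1)$ and every $\gamma_2\in\calS(c_2)$, Lemma \ref{nocrosssamebal}(1) applies and yields $|\beta(\gamma_1)-\beta(\gamma_2)|\leq C$, with $C$ the universal constant from that lemma. Now recall that $\beta(c_j)$ is by Definition \ref{baltime} the (multiplicity-weighted) average of $\beta(\gamma)$ over $\gamma\in\calS(c_j)$. An average of numbers each within $C$ of a fixed number $\beta(\gamma_1)$ is itself within $C$ of $\beta(\gamma_1)$; averaging again over $\gamma_1\in\calS(c_1)$, we conclude $|\beta(c_1)-\beta(c_2)|\leq C$. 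Thus the disjoint-case bound holds with $L=C$, and the reduction above then gives the proposition with, say, $L=\max\{C,1\}$.

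The only genuine subtlety — and the step I would be most careful about — is the passage from the pointwise bound on saddle connections to the bound on the averages $\beta(c_j)$; this is a soft convexity/averaging argument, but one must make sure the multiplicities are handled consistently, which is immediate from Definition \ref{baltime}. Everything else is formal: the triangle-inequality telescoping along a curve-graph geodesic, and the two cited lemmas. Note in particular that we never need part 2) of either preliminary lemma here, in keeping with the remark that part 1) suffices for the loxodromic statement.
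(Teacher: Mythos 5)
Your proposal is correct and follows essentially the same route as the paper: reduce to the case of disjoint curves (the telescoping along a curve-graph geodesic is exactly what the paper's ``it is enough to show'' compresses), then combine Lemma~\ref{saddledontcross}(1) with Lemma~\ref{nocrosssamebal}(1) to bound $|\beta(\gamma_1)-\beta(\gamma_2)|\leq C$ for all pairs of saddle connections, and pass to the averages defining $\beta(c_1),\beta(c_2)$. Your explicit treatment of the averaging step is a fine (and harmless) elaboration of what the paper leaves implicit, and taking $L=\max\{C,1\}$ matches the paper's $L=C$.
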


\begin{proof}
It is enough to show that there exists $L$ so that
$|\beta(c_1)-\beta(c_2)|\leq L$ when $c_1$ and $c_2$ are
disjoint. By Lemma \ref{saddledontcross}, no $\gamma\in
\calS(c_i)$ intersects transversally a $\gamma'\in \calS(c_j)$,
for $i,j\in\{1,2\}$. Hence, we get
$|\beta(\gamma)-\beta(\gamma')|\leq C$ for $C$ as in Lemma
\ref{nocrosssamebal}. Thus we can take $L=C$.
 \end{proof}

\begin{rem}\label{samecurvesamebal}
Notice that we have also just proved that for any curve $c$ and
any $\gamma\in \calS(c)$ we have $|\beta(\gamma)-\beta(c)|\leq
C$ for $C$ as in Lemma \ref{nocrosssamebal}.
 \end{rem}

\begin{cor}\label{lox}
 $\phi$ acts loxodromically on the curve graph.
\end{cor}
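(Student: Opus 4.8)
The plan is to deduce loxodromicity of $\phi$ on $\calC(S)$ directly from the coarse Lipschitz property of $\beta$ established in Proposition~\ref{balcorselip}, combined with the $\phi$-equivariance recorded in Remark~\ref{equivariance}. Recall that an element acting on a metric space is loxodromic precisely when some (equivalently any) orbit map is a quasi-isometric embedding of $\Z$; here it is enough to exhibit a single vertex $c\in\calC(S)^{(0)}$ and a constant $\epsilon>0$ with $d_{\calC(S)}(c,\phi^n c)\geq \epsilon|n|$ for all $n\in\Z$.

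First I would fix any curve $c$ and apply Remark~\ref{equivariance} iteratively to get $\beta(\phi^n c)=\beta(c)+n$ for every integer $n$, so that $|\beta(\phi^n c)-\beta(c)|=|n|$. Next, I would invoke Proposition~\ref{balcorselip} with the pair of curves $c_1=c$ and $c_2=\phi^n c$, which yields
\[
|n| = |\beta(\phi^n c)-\beta(c)| \leq L\, d_{\calC(S)}(c,\phi^n c).
\]
Rearranging gives $d_{\calC(S)}(c,\phi^n c)\geq |n|/L$, which is exactly the loxodromicity condition with $\epsilon=1/L$. Since this holds for every vertex $c$ (the same $L$ works), $\phi$ acts loxodromically on $\calC(S)$.

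There is essentially no obstacle here: all the work has been front-loaded into Proposition~\ref{balcorselip}, whose proof in turn rests on the two preliminary lemmas. The only minor point worth stating carefully is that $\phi$ indeed acts on $\calC(S)$ by isometries permuting vertices (so that $\phi^n c$ is again a curve and distances are preserved), and that the definition of loxodromic in the introduction is the orbit-growth condition $d_X(x,g^n x)\geq\epsilon|n|$, which is precisely what the displayed inequality provides. One could also remark that the same argument shows $\phi$ is loxodromic on $\calC_{\mathrm{aug}}(S)$, since that graph is quasi-isometric to $\calC(S)$, but this is not needed.
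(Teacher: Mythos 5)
Your argument is correct and coincides with the paper's own proof: apply the $\phi$-equivariance of $\beta$ from Remark~\ref{equivariance} to get $|\beta(\phi^n c)-\beta(c)|=|n|$, then the coarse Lipschitz bound of Proposition~\ref{balcorselip} gives $d_{\calC(S)}(c,\phi^n c)\geq |n|/L$, i.e.\ loxodromicity with $\epsilon=1/L$. No gaps; the remarks about isometric action and the definition of loxodromic are fine but not needed beyond what the paper already records.
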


\begin{proof}
Recall from Remark \ref{equivariance} that $\beta$ is
$\phi$-equivariant, meaning that for every curve $c$ we have
$\beta(\phi(c))=\beta(c)+1$. Hence, in view of Proposition
\ref{balcorselip}, for any fixed curve $c$ and integer $n$ we
have
$$d_{\calC(S)}(c,\phi^n(c))\geq |\beta(\phi^n(c))-\beta(c)|/L=|n|/L,$$
so that $\phi$ is loxodromic as required.
\end{proof}

\subsection{Pseudo-Anosovs are WPD}\label{sec:WPD}

To simplify notation, we will denote $c_n=\phi^n(c)$ for any
curve $c$.

\begin{prop}\label{pAWPD}
For every curve $c$ and $R\geq 0$ the following holds. For
every sufficiently large $n$ there are only finitely many
curves of the form $hc$ or $hc_n$ where $h\in MCG(S)$ satisfies
$d_{\calC(S)}(c,hc)\leq R$ and $d_{\calC(S)}(c_n,hc_n)\leq R$.
\end{prop}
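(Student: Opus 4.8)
The strategy is to use the balanced time map $\beta$ as a coarse invariant that pins down, up to bounded ambiguity, where an element $h$ with the stated properties must send $c$ and $c_n$ in the ``$\R$-direction''. First I would observe that if $d_{\calC(S)}(c,hc)\leq R$ then, by Proposition~\ref{balcorselip}, $|\beta(hc)-\beta(c)|\leq LR$, and likewise $|\beta(hc_n)-\beta(c_n)|\leq LR$. Since $\beta$ is $\phi$-equivariant (Remark~\ref{equivariance}) and $h$ is a mapping class, $\beta(hc_n)=\beta(h\phi^n c)$; combining the two inequalities gives a bound of the form $|\beta(hc_n)-\beta(hc)-n|\leq 2LR$. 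In other words, regardless of $h$, the pair $(hc,hc_n)$ has balanced times whose difference is within $2LR$ of $n$. The point of taking $n$ large is to force this difference to be large, so that Lemma~\ref{nocrosssamebal}(2) and Lemma~\ref{saddledontcross}(2) become applicable to produce a \emph{definite lower bound on the intersection number} of $hc$ with $hc_n$ — but since $h$ preserves intersection numbers, that lower bound equals $i(c,c_n)$, which (again using that $\phi$ is pseudo-Anosov, so $i(c,c_n)\to\infty$) is a contradiction unless $hc$ and $hc_n$ are confined to a bounded set.

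More precisely, here is the order of steps I would carry out. (i) Fix $c$ and $R$, set $C$ as in Lemma~\ref{nocrosssamebal} and $L$ as in Proposition~\ref{balcorselip}. Choose $M:=i(c,c_n)$ — wait, better to run it the other way: pick $n$ large enough that $i(c,c_n)$ exceeds the threshold that will appear. (ii) For any admissible $h$, use Remark~\ref{samecurvesamebal} to pass from $\beta(hc),\beta(hc_n)$ to the balanced times of individual saddle connections $\gamma\in\calS(hc)$, $\gamma'\in\calS(hc_n)$, getting $|\beta(\gamma)-\beta(\gamma')|$ close to $n$, hence $\geq C$ once $n>2LR+2C$ say. (iii) Here I need also the \emph{length} hypothesis of Lemma~\ref{nocrosssamebal}(2): I would argue that some saddle connection $\gamma'\in\calS(hc_n)$ has $l_{\beta(\gamma)}(\gamma')$ large, because the geodesic representative of $hc_n$ must have definite length in every metric $d_t$ and it decomposes into boundedly many saddle connections only if... — this is the delicate point, see below. (iv) Apply Lemma~\ref{nocrosssamebal}(2) with the chosen $M$ to get $\geq M$ transverse intersections between some $\gamma\in\calS(hc)$ and some $\gamma'\in\calS(hc_n)$, then Lemma~\ref{saddledontcross}(2) to conclude $i(hc,hc_n)\geq M$. (v) Since $h$ is a homeomorphism, $i(hc,hc_n)=i(c,c_n)$; choosing $n$ so that $i(c,c_n)$ exceeds the $M$ we were forced to use gives a contradiction — meaning no such $h$ exists beyond those for which the length hypothesis fails, and that failure confines $hc$ and $hc_n$ to finitely many curves (geodesic representatives of bounded length in a fixed $d_t$ form a finite set, up to the mapping class group... actually a finite set outright, by discreteness).

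\textbf{The main obstacle.}
The crux is step (iii): Lemma~\ref{nocrosssamebal}(2) requires not just that the balanced times differ by at least $C$ but also that $l_{\beta(\gamma_1)}(\gamma_2)\geq D$, i.e.\ that the second saddle connection is \emph{long in the first one's balanced metric}. For a curve $c_2=hc_n$ whose balanced time $\beta(c_2)$ is far from that of $\gamma_1$, its geodesic representative is stretched in one foliation direction in the metric $d_{\beta(\gamma_1)}$, so one expects it to contain a long saddle connection — but a priori it could consist of many short saddle connections. I would handle this by a dichotomy: either some $\gamma'\in\calS(hc_n)$ is long (in $d_{\beta(\gamma)}$) — then run the argument above — or all of them are short, in which case $\calS(hc_n)$ consists of boundedly many short saddle connections (short in one metric whose parameter we control), so the geodesic representative of $hc_n$ in that fixed metric has bounded length, hence $hc_n$ lies in a finite set; and symmetrically for $hc$. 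Reconciling the quantifiers — first pick $n$, then $M=M(n)$ via $i(c,c_n)$, then $D=D(M)$ from Lemma~\ref{nocrosssamebal}(2), then require $n$ large enough relative to $D$ and $L,R,C$ — is a bit fiddly but is the kind of bookkeeping that goes through once the dichotomy above is set up correctly, and this is exactly where I would expect to spend the real effort.
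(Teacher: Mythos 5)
Your overall strategy is the right one and matches the paper's: use equivariance plus the coarse Lipschitz property of $\beta$ to force a large gap $|\beta(hc)-\beta(hc_n)|$, pass to saddle connections via Remark~\ref{samecurvesamebal}, and then use Lemmas~\ref{nocrosssamebal} and~\ref{saddledontcross} to produce more transverse intersections than $i(hc,hc_n)=i(c,c_n)$ allows, thereby bounding the $d_{\beta(c)}$-length of $hc_n$ and concluding by finiteness of bounded-length geodesics. You also correctly isolate the crux: Lemma~\ref{nocrosssamebal}(2) needs a \emph{long} saddle connection, not just a balanced-time gap.

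However, your proposed resolution of that crux has a genuine gap. In the second branch of your dichotomy (``all saddle connections of $hc_n$ are short'') you assert that $\calS(hc_n)$ then ``consists of boundedly many short saddle connections, so the geodesic representative of $hc_n$ has bounded length.'' There is no a priori bound on the number of saddle connections in the geodesic representative of $hc_n$ (it varies without bound over curves, and is not controlled by $n$ or $h$), and bounding the total length of $hc_n$ is exactly what the whole argument is trying to establish — so as stated this step is circular. The paper closes the gap by making the dichotomy interact with the intersection number: set $M-1=i(c,c_n)$ and split on whether $\calS(hc_n)$ has at least $M$ elements. If it does, no length hypothesis is needed at all: the balanced-time gap together with part~(1) of Lemma~\ref{nocrosssamebal} (in contrapositive) forces \emph{each} of these $\geq M$ saddle connections to cross any fixed $\gamma\in\calS(hc)$ transversally, and Lemma~\ref{saddledontcross}(2) sums these crossings to give $i(hc,hc_n)\geq M>i(c,c_n)$, a contradiction. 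If it has fewer than $M$ elements and $l_{\beta(c)}(hc_n)\geq D'=\lambda^{LR+C}MD$, pigeonhole produces a single saddle connection of $d_{\beta(\gamma)}$-length $\geq D$, and part~(2) applies. This also dissolves the quantifier worry you flag: $n$ only needs to be large relative to $L,R,C$ (to create the balanced-time gap); $M$, $D$ and $D'$ are then defined in terms of that $n$, and no further largeness of $n$ relative to $D$ is required — the requirement you list (``then require $n$ large enough relative to $D$'') would indeed be circular, but it is unnecessary.
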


\begin{proof}
Let us show finiteness of the set of curves of the form $hc_n$,
since finiteness of the set of curves of the form $hc$ can be
proven symmetrically.

Our aim is to show that whenever $n$ is large enough we can
bound $l_t(hc_n)$ (meaning the length of the geodesic
representative) with $t=\beta(c)$. Once we do this, we get that
there are finitely many possible $hc_n$'s since there are only
finitely many geodesics whose length with respect to $d_t$ is
bounded above by any given constant.

We now have to choose constants carefully. Let $C$ be as in
Lemma~\ref{nocrosssamebal} and~$L$ be the coarsely Lipschitz
constant for $\beta$ as in Proposition~\ref{balcorselip}.
Suppose that $n$ is large enough so that $d_{\calC(S)}(c,c_n)\geq
2L^2R+3LC$, and let $M-1$ be the intersection number of $c$ and
$c_{n}$. Finally, let $D$ be as in Lemma~\ref{nocrosssamebal}
for the given $M$, and let $D'=\lambda^{LR+C}MD$.

Now, assume by contradiction $l_t(hc_n)\geq D'$. Notice that by
Proposition~\ref{balcorselip} we have $|\beta(hc)-\beta(c)|\leq
LR$ and $|\beta(hc_n)-\beta(c_n)|\leq LR$, so that
$$|\beta(hc)-\beta(hc_{n})|\geq |\beta(c)-\beta(c_{n})|-2LR\geq d_{\calC(S)}(c,c_{n})/L-2LR\geq 3C.$$
In particular, for every $\gamma\in \calS(hc)$ and $\gamma_n\in
\calS(hc_n)$ we have $|\beta(\gamma)-\beta(\gamma_n)|\geq C$ by
Remark~\ref{samecurvesamebal}.

First, suppose that $\calS(hc_n)$ contains at least $M$
elements. Notice that each saddle connection in $\calS(hc_n)$
intersects transversally any saddle connection in $\calS(hc)$
by Lemma~\ref{nocrosssamebal}-(1). Hence, by
Lemma~\ref{saddledontcross}-(2) the intersection number of $hc$
and $hc_n$ is at least $M$, a contradiction.

Otherwise, there exists $\gamma_n\in\calS(hc_n)$ such that
$l_{t}(\gamma_n)\geq D'/M$. Pick any $\gamma\in \calS(hc)$.
Then, since $|t-\beta(\gamma)|\leq
|\beta(c)-\beta(hc)|+|\beta(hc)-\beta(\gamma)|\leq LR+C$ (by
Proposition~\ref{balcorselip} and
Remark~\ref{samecurvesamebal}), we have
$$l_{\beta(\gamma)}(\gamma_n)\geq l_t(\gamma_n) \lambda^{-LR-C}\geq D.$$
Hence, by Lemma~\ref{nocrosssamebal}-(2), $\gamma$ and
$\gamma_n$ intersect transversally at least $M$ times, so that
by Lemma \ref{saddledontcross}-(2) the intersection number of
$hc$ and $hc_n$ is at least $M$, a contradiction.
\end{proof}

Recall that $\phi\in \mathrm{MCG}(S)$ is \emph{WPD} (for the
action on $\calC(S)$) if, given any curve $c$ and any $R\geq
0$, for any sufficiently large $n$ there are only finitely many
elements $h\in \mathrm{MCG}(S)$ satisfying
$d_{\calC(S)}(c,hc)\leq R$ and $d_{\calC(S)}(c_n,hc_n)\leq R$.

\begin{cor}\label{WPD}
$\phi$ is WPD.
\end{cor}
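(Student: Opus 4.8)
The plan is to deduce Corollary~\ref{WPD} from Proposition~\ref{pAWPD} by a standard counting argument: the point is that there can be at most finitely many group elements $h$ sending a fixed curve $c$ to a fixed curve and simultaneously sending $c_n$ to a fixed curve, because such an $h$ is determined up to a finite ambiguity by these two images. Concretely, I would first invoke Corollary~\ref{lox} to know that $\phi$ is loxodromic, so that $\phi$ has infinite order and, for $n$ large, $c$ and $c_n$ are ``far apart'' in $\calC(S)$; in particular they fill the surface $S$ (they have no common essential neighbour, and more strongly every essential subsurface meets one of them essentially). This last point I would justify by noting that if $c$ and $c_n$ did not fill, then there would be a curve disjoint from both, giving $d_{\calC(S)}(c,c_n)\le 2$, contradicting the largeness of $n$.

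Next I would recall the elementary fact that the pointwise stabiliser in $\mathrm{MCG}(S)$ of a filling pair of curves is finite. One clean way to see this: a filling pair $(c,c_n)$ determines a finite cell structure on $S$ (vertices the intersection points, edges the arcs of $c$ and $c_n$ between consecutive intersection points, faces the complementary disks), and any mapping class fixing both $c$ and $c_n$ (with orientations, or up to the finite ambiguity of orientations and of which intersection point goes where) permutes the finitely many cells; since a homeomorphism of $S$ is determined up to isotopy by its action on such a cell structure, the stabiliser is finite. I would state this as a small lemma or simply cite it as well known.

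Now the main argument: suppose $h_1, h_2 \in \mathrm{MCG}(S)$ both satisfy $d_{\calC(S)}(c,h_ic)\le R$ and $d_{\calC(S)}(c_n,h_ic_n)\le R$. By Proposition~\ref{pAWPD}, for $n$ large enough there are only finitely many possible values of $h_ic$ and of $h_ic_n$. Hence, passing to a subset, we may assume $h_1 c = h_2 c$ and $h_1 c_n = h_2 c_n$, i.e.\ $g := h_2^{-1}h_1$ fixes both $c$ and $c_n$. Since $(c, c_n)$ is a filling pair, $g$ lies in the finite pointwise stabiliser of $\{c,c_n\}$. Therefore the set of $h$ in question is contained in a finite union of cosets of a finite group, hence is finite, which is exactly the assertion of Corollary~\ref{WPD} after recalling the definition of WPD just stated in the text.

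The only genuine subtlety — and hence the step I would be most careful about — is the passage from ``$c$ and $c_n$ are far apart in the curve graph'' to ``$c$ and $c_n$ fill $S$'' and from there to finiteness of the stabiliser; everything else is bookkeeping on top of Proposition~\ref{pAWPD}. If one wants to avoid even invoking the ``filling $\Rightarrow$ finite stabiliser'' fact, an alternative is to observe directly that a mapping class fixing the two curves $c$ and $c_n$ (which, for $n$ large, already fill) fixes a point of $\mathrm{Teich}(S)$ — the one minimising a natural combination of extremal lengths of $c$ and $c_n$ — and $\mathrm{MCG}(S)$ acts properly discontinuously on Teichmüller space, so the stabiliser is finite. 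I would choose whichever of these two justifications is shorter to write, most likely the cell-structure one, and keep the whole proof to a few lines.
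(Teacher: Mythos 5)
Your proposal is correct and follows essentially the same route as the paper: use Proposition~\ref{pAWPD} to bound the possible images $hc$ and $hc_n$, note that $c$ and $c_n$ fill $S$ for $n$ large (the paper asserts this directly, while you derive it from loxodromicity via the curve-graph distance), and conclude via the finiteness of the set of mapping classes sending the filling pair to a fixed pair. Your extra justifications (the cell-structure argument for finiteness of the stabiliser and the distance-$\le 2$ argument for filling) simply fill in details the paper leaves implicit.
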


\begin{proof}
Fix any curve $c$ and any $R\geq 0$. For $n$ sufficiently large
the curves $c$ and $c_n$ form a filling pair, that is, the
complementary regions of the union of the representatives of
$c$ and $c_n$ in minimal position are discs. By
Proposition~\ref{pAWPD}, there are finitely many possibilities
for $b=hc$ and $b_n=hc_{n}$. Since the complementary regions of
$c\cup c_n$ are disks, for fixed $b$ and $b_n$ there are only
finitely many $h$ satisfying $hc=b, hc_n=b_n$. Hence, there are
finitely many possibilities for $h$, as required.
 \end{proof}

\bibliographystyle{alpha}
\bibliography{biblio.bib}

\end{document}